\newcommand{\ints}{\mathbb Z}
\newcommand{\str}{\mathcal{O}}
\newcommand{\proj}{\mathbb{P}}
\newcommand{\complex}{\mathbb C}
\theoremstyle{plain}
\numberwithin{equation}{section}
\newtheorem{theorem}{Theorem}[section]
\newtheorem*{theorem*}{Theorem}
\newtheorem{lemma}[theorem]{Lemma}
\newtheorem{corollary}[theorem]{Corollary}
\newtheorem*{conjecture*}{Nagata Conjecture}
\newtheorem*{conjecture1*}{SHGH Conjecture}
\theoremstyle{definition}
\newtheorem{remark}[theorem]{Remark}
\newtheorem{example}[theorem]{Example}
\begin{document}
\title{Multi-point Seshadri constants on ruled surfaces}
\author[Krishna Hanumanthu]{Krishna Hanumanthu}
\address{Chennai Mathematical Institute, H1 SIPCOT IT Park, Siruseri, Kelambakkam 603103, India}
\email{krishna@cmi.ac.in}

\author[Alapan Mukhopadhyay]{Alapan Mukhopadhyay}
\address{Chennai Mathematical Institute, H1 SIPCOT IT Park, Siruseri, Kelambakkam 603103, India}
\email{alapan@cmi.ac.in}

\subjclass[2010]{Primary 14C20; Secondary 14H50}
\thanks{Authors were partially supported by a grant from Infosys Foundation}

\date{January 20, 2017}
\maketitle

\begin{abstract}
Let $X$ be a surface and let $L$ be an ample line bundle on $X$. We
first obtain a lower bound for the Seshadri constant $\varepsilon(X,L,r)$, when
$r \ge 2$. We then assume that $X$ is a ruled surface and study
Seshadri constants on $X$ in greater detail. We also make
precise computations of Seshadri constants on $X$ in some cases.
\end{abstract}

\section{Introduction}
Seshadri constants have been defined by Demailly \cite{Dem} as a
measure of local positivity of a line bundle on a projective
variety. The foundational idea is a criterion for
ampleness given by Seshadri \cite[Theorem 7.1]{Har70}. For a detailed
introduction and the state of current research on
Seshadri constants, see \cite{primer}. 

Let $X$ be a smooth complex projective variety and let $L$ be a nef line bundle on
$X$. Let $r \ge 1$ be an integer and let $x_1,\ldots,x_r$ be distinct
points of $X$.
The {\it Seshadri constant} of $L$ at $x_1,\ldots,x_r \in X$ is defined as: 
$$\varepsilon(X,L,x_1,\ldots,x_r):=  \inf\limits_{\substack{C \subset
    X{\rm ~a~ curve~ with} \\C \cap \{x_1,\ldots,x_r\}
  \ne \emptyset}} \frac{L\cdot C}{\sum\limits_{i=1}^r {\rm
    mult}_{x_i}C}.$$ 

It is easy to see that the infimum above is the same as the infimum taken over
irreducible, reduced curves $C$ such that $C \cap \{x_1,\ldots,x_r\}
\ne \emptyset$.

The Seshadri criterion for ampleness says that a line bundle $L$ on a
smooth projective variety is ample if and only if $\varepsilon(X,L,x)>0$ for
every $x \in X$. 

Now define $$\varepsilon(X,L,r) : = \max\limits_{x_1,\ldots,x_r \in X}
\varepsilon(X,L,x_1,\ldots,x_r).$$ 

It is known that $\varepsilon(X,L,r)$ is attained at a {\it very 
general} set of points $x_1,\ldots,x_r \in X$; see
\cite{Ogu}. This means that $\varepsilon(X,L,r) = 
\varepsilon(X,L,x_1,\ldots,x_r)$ for all 
 $(x_1,\ldots,x_r)$ 
outside some countable union of proper Zariski closed sets in 
$X^r  = X \times X \times \ldots \times X$.

The following is a  well-known upper bound for Seshadri
constants. 
Let $n$ be the dimension of $X$. Then for
any $x_1,\ldots,x_r \in X$, 
\begin{eqnarray*}\label{wellknown}
\varepsilon(X,L,x_1,\ldots,x_r) \le
\sqrt[n]{\frac{L^n}{r}}.
\end{eqnarray*}

In view of this upper bound, a lot of research is aimed at finding
good lower bounds for the Seshadri constants of ample line bundles,
primarily when $X$ is a surface. 
There has been extensive work on computing or finding lower bounds for
Seshadri contants on surfaces, mainly 
in the single point case ($r=1$). The
multi-point case ($r \ge 2$) is also of interest and there are results in
this case in various situations. 
Some results for multi-point Seshadri constants can be found in
\cite{Bau,Far,FSST,Han1,Har,HR08,SS,Sze,Sze08}.

Let $X$ be a surface and let $L$ be an ample line bundle on $X$. 
One of the crucial ideas in finding lower bounds is the observation
that if a Seshadri constant $\varepsilon(X,L,x_1,\ldots,x_r)$ is {\it
  sub-maximal} (i.e., $\varepsilon(X,L,x_1,\ldots,x_r) < \sqrt{L^2/r}$),
then there is actually an irreducible and reduced curve $C$ which
passes through at least one of the points $x_i$ such that 
$\varepsilon(X,L,x_1,\ldots,x_r) = \frac{L\cdot C}{\sum_i \text
  {mult}_{x_i}(C)}$.
Such curves are called {\it Seshadri curves}.  See 
\cite[Proposition 1.1]{BS} for a proof of their existence for
sub-maximal single-point Seshadri constants which generalizes 
easily to the multi-point case.

In our first main result,  Theorem \ref{main-general}, we consider 
an arbitrary smooth projective surface $X$ and an ample line bundle $L$ on
$X$, and show that, for an integer $r \ge 2$, the Seshadri constant satisfies 
$\varepsilon(X,L,r) \ge \sqrt{\frac{r+2}{r+3}}\sqrt{\frac{L^2}{r}}$, or
there is a Seshadri curve $C$ on $X$ passing through $s \le r$ very general
points with multiplicity one at each point. This bound is a generalization of 
\cite[Theorem 2.1]{Han1}, where it was proved for surfaces with Picard 
number 1. 

In some situations, it is possible to either rule out the existence of a
Seshadri curve passing through very general points with 
multiplicity one each, or limit the possibilities for such curves. 
We illustrate this phenomenon in a few cases. 
See Example \ref{k3} and Lemma \ref{rational-curve}.

In Section \ref{ruled-surfaces}, we 
study multi-point Seshadri constants on ruled surfaces. 
Motivated by the results of \cite{Fue,Syz}, where 
single-point Seshadri constants are studied on ruled surfaces,  
we make precise
computations of  multi-point Seshadri constants (Theorem
\ref{r<e}) when the number of points $r$ is small. Then we study the
case of rational ruled surfaces in greater detail. We
show in Theorem \ref{rational-ruled-seshadri} that, given a 
rational ruled surface $X$ and an ample line bundle
$L$ on $X$, there exists a large enough $r$ (depending on $L$) such that
there are no Seshadri curves for $L$ passing through $r$ very general points
with multiplicity one each.

Let $(X,L)$ be a polarized surface (that is, $X$ is a surface and $L$ is an ample line
bundle on $X$).  The {\it Nagata-Biran-Szemberg Conjecture}  
predicts that for large enough $r$, 
the Seshadri constant $\varepsilon(X,L,r)$ is maximal; i.e, 
$\varepsilon(X,L,r) = \sqrt{\frac{L^2}{r}}$. In fact, the conjecture
makes a precise prediction about how large $r$ should be. It says that the 
maximality holds for $r \ge k_0^2L^2$, where $k_0$ is the
smallest integer such that the linear system $|k_0L|$ contains
non-rational curves. Note that $k_0=3$ when $(X,L) =
(\proj^2,\str(1))$. In this sense, the Nagata-Biran-Szemberg
Conjecture is a generalization of the celebrated {\it Nagata
Conjecture} (see \cite[Remark 5.1.24]{L}, \cite{SS}, or \cite{Sze01} for more details). 
The Nagata-Biran-Szemberg Conjecture has
been verified asymptotically in \cite[Corollary 4.4]{SS}. 

The results in this paper provide further support for the 
Nagata-Biran-Szemberg conjecture. For small $r$, specifically for 
$r < k_0^2L^2$, the Seshadri constant may be small, but not for 
$r\ge k_0^2L^2$. This is evident from our results on ruled
surfaces. In Theorem \ref{r<e}, we show that the Seshadri constants 
are small on a ruled surface when $r$ is small. On a rational 
ruled surface, Theorem \ref{rational-ruled-seshadri} shows that  
$\varepsilon(X,L,r)$ asymptotically approaches the maximal value as 
$r$ increases.


All the varieties we consider are defined over $\complex$, the field
of complex numbers.  A {\it surface} is a
two-dimensional smooth complex projective variety. 


{\bf Acknowledgements:} We sincerely thank Brian Harbourne, 
D. S. Nagaraj and Tomasz Szemberg for useful discussions. We also
thank the referee for pointing out a way to strengthen Theorem 2.1 
and numerous other suggestions which improved the exposition.

\section{A general lower bound}\label{general-result}
In this section, we prove a general lower bound for multi-point
Seshadri constants on any surface.  This is a 
generalization of \cite[Theorem 2.1]{Han1} 
where the same bound is proved for surfaces with Picard
number 1.

\begin{theorem}\label{main-general}
Let $X$ be a surface and let $L$ be an ample line bundle on $X$. Let
$r \ge 2$ be an integer. If $\varepsilon(X,L,r) <
\sqrt{\frac{r+2}{r+3}}\sqrt{\frac{L^2}{r}}$, 
then 
$\varepsilon(X,L,r) = \frac{L\cdot C}{s}$, where  
$C$ is an irreducible and reduced curve
on $X$ which passes through $s\le r$ very general points with multiplicity one
at each point.

Moreover, for every $r \ge 2$, there exists a polarized
surface $(X,L)$ such that $\varepsilon(X,L,r) <
\sqrt{\frac{r+2}{r+3}}\sqrt{\frac{L^2}{r}}$.
\end{theorem}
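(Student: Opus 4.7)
The plan for Part 1 is to combine the Seshadri curve existence criterion with the Hodge Index Theorem and a Xu-style moving-family estimate. Under the hypothesis $\varepsilon(X,L,r) < \sqrt{(r+2)/(r+3)}\sqrt{L^2/r}$, the Seshadri constant is in particular sub-maximal, so by the multi-point analogue of \cite[Proposition 1.1]{BS} mentioned in the introduction there is an irreducible reduced curve $C$ with $\varepsilon(X,L,r) = (L \cdot C)/M$, where $M = \sum_{i=1}^{s} m_i$, each $m_i = \text{mult}_{x_i}(C) \ge 1$, and $s \le r$ is the number of very general points met by $C$. The goal is to show $m_i = 1$ for every $i$, which gives $\varepsilon(X,L,r) = (L\cdot C)/s$ as claimed.

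The key step is a Xu-type lower bound for $C^2$ in terms of the $m_i$'s: since the $x_i$ are very general, the numerical class of $C$ propagates as the points vary, and intersection-theoretic analysis of the resulting family yields $C^2 \ge N - (\text{small correction})$ with $N = \sum m_i^2$, and in particular $C^2 \ge 0$. I would then apply the Hodge Index inequality $(L\cdot C)^2 \ge L^2 C^2$ together with Cauchy--Schwarz $N \ge M^2/s$ and the hypothesis $(L\cdot C)^2 < (r+2) L^2 M^2 / (r(r+3))$ to obtain a strict numerical inequality among $M$, $s$, $r$. The constant $(r+2)/(r+3)$ is calibrated so that, given $s \le r$ and $m_i \ge 1$, this inequality forces $M = s$, i.e., every $m_i = 1$. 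The extension from Picard rank one (as in \cite[Theorem 2.1]{Han1}) to arbitrary surfaces is made possible by the Hodge Index step, which is automatic in rank one.

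For Part 2, I would produce, for each $r \ge 2$, a polarized surface $(X,L)$ together with a curve $C$ passing through $r$ very general points with multiplicity one at each, for which $(L \cdot C)^2(r+3) < r(r+2)L^2$ holds; this immediately yields $\varepsilon(X,L,r) \le (L\cdot C)/r < \sqrt{(r+2)L^2/(r(r+3))}$. The case $r = 2$ is handled by $(\proj^2, \str(1))$ with $C$ a line through two of the very general points; for $r = 3$, $(\proj^1 \times \proj^1, \str(1,1))$ with a $(1,1)$-curve through three general points works. For larger $r$ one can use ruled-surface examples (the theme of Section \ref{ruled-surfaces}) or analogous constructions where the linear system of some curve class has sufficient dimension to absorb $r$ general points while keeping the ratio $(L\cdot C)/r$ below the threshold.

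The hardest part is Part 1's Xu-style bound: its precise form, and its interaction with the specific threshold $(r+2)/(r+3)$, are what drive the proof, and extracting the correct moving-family structure from the very-general-position hypothesis is the main technical input. Part 2 is a construction problem, conceptually simpler but requiring explicit verification to see that one can cover every $r \ge 2$.
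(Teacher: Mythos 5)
Your overall skeleton for Part 1 (Seshadri curve existence, a Xu-type bound on $C^2$, Hodge Index, and a numerical inequality calibrated to the threshold $\sqrt{\tfrac{r+2}{r+3}}$) is the right one, and your Part 2 examples are essentially the paper's Example \ref{rational-ruled} ($\proj^2$ for $r=2$ and rational ruled surfaces $\proj(\str_{\proj^1}\oplus\str_{\proj^1}(-e))$ with $L=C_0+nf$, $r=2n-e+1$ for $r\ge 3$). But the quantitative heart of Part 1 has a genuine gap. The chain ``Xu bound $C^2\ge \sum m_i^2-m_s$, then Cauchy--Schwarz $\sum m_i^2\ge M^2/s$, then Hodge'' does \emph{not} force all $m_i=1$. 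Working it out, the hypothesis only yields $M^2 < s(s+3)m_s$, hence $m_s < 1+3/s$: this controls the \emph{smallest} multiplicity, not all of them, and Cauchy--Schwarz throws away exactly the information that distinguishes, say, $(m_1,m_2,m_3)=(2,1,1)$ from $(1,1,1)$ at the same $M$. The paper instead uses the sharper combinatorial inequality $\frac{s(s+3)}{s+2}\left(\sum_{i=1}^s m_i^2-m_s\right)\ge\left(\sum_{i=1}^s m_i\right)^2$ from \cite[Lemma 2.3]{Han}, valid when $m_1\ge 2$ and either $s\ge 3$ or $s=2$ with $(m_1,m_2)\ne(2,2)$; combined with Hodge and $s\le r$ this gives $\varepsilon(X,L,r)\ge\sqrt{\tfrac{r+2}{r+3}}\sqrt{\tfrac{L^2}{r}}$ directly, contradicting the hypothesis whenever some $m_i\ge 2$.

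Worse, one configuration is a genuine counterexample to your scheme no matter how you arrange the elementary inequalities: $s=2$, $(m_1,m_2)=(2,2)$, $r=2$. There the Xu bound gives only $C^2\ge 6$, whence $\varepsilon\ge\sqrt{6/16}\,\sqrt{L^2}=\sqrt{3/8}\,\sqrt{L^2}$, while the threshold squared is $\tfrac{r+2}{r(r+3)}=\tfrac{2}{5}>\tfrac{3}{8}$, so no contradiction is reached. The paper must invoke the strictly stronger gonality-enhanced bound $C^2\ge m_1^2+\cdots+m_s^2-m_1+\mathrm{gon}(\tilde C)\ge 7$ (from \cite{Bas}, \cite{KSS}, \cite{Far}) to push past the threshold, and it also treats $s=1$, $m\ge 2$ separately via $m^2-m\ge 2m^2/5$. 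Your proposal contains no mention of the gonality refinement, and without it the case $r=2$ of the theorem cannot be closed. So the ``calibration'' you assert is not achievable by Cauchy--Schwarz plus the basic Xu inequality alone; identifying and importing these two stronger inputs is the missing content.
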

\begin{proof}

Suppose that $\varepsilon(X,L,r) <
\sqrt{\frac{r+2}{r+3}}\sqrt{\frac{L^2}{r}}$. In particular, the Seshadri
constant $\varepsilon(X,L,r)$ is not maximal. Then, as we noted in the introduction,
there exists an
irreducible and reduced curve $C$ such that 
$\varepsilon(X,L,r) = \frac{L\cdot C}{\sum_i m_i}$, where
$m_1,\ldots,m_r$ are the multiplicities of $C$ at some very general points
$x_1,\ldots,x_r$. 

Arrange the multiplicities in decreasing order, so
that $m_1 \ge m_2\ge \ldots \ge m_s > 0$ and $m_{s+1} = \ldots = m_r =
0$ for some $1 \le s \le r$. 

Since the points $x_1,\ldots,x_r$ are very general, there is a
non-trivial one-parameter family of irreducible and reduced curves 
$\{C_t\}_{t\in T}$ parametrized by some smooth curve $T$ and containing
points $x_{1,t},\ldots,x_{r,t} \in C_t$ with mult$_{x_{i,t}}(C_t) \ge
  m_i$ for all $1\le i \le r$ and $t \in T$. 

By a result of Ein-Lazarsfeld \cite{EL} and Xu \cite[Lemma 1]{X1}, we have 
\begin{eqnarray}\label{el}
C^2 \ge
m_1^2+\ldots+m_s^2-m_s.
\end{eqnarray}

First, suppose that $m_1=1$. Hence $m_1=\ldots=m_s=1$. In this case, the Seshadri constant
$\varepsilon(X,L,r)$ is computed by a curve $C$ which passes through $s
\le r$ very general points with multiplicity one each. 

Now we assume that $m_1 \ge 2$ and show that 
$\varepsilon(X,L,r) \ge
\sqrt{\frac{r+2}{r+3}}\sqrt{\frac{L^2}{r}}$, contradicting our
assumption.

We will apply the following special case of 
\cite[Lemma 2.3]{Han}, which holds when either $m_1 \ge 2$ and $s \ge 3$ or if $s=2$ then
$(m_1,m_2) \ne (2,2)$.
\begin{eqnarray}\label{ineq}
\frac{(s+3)s}{s+2}\left(\sum_{i=1}^s m_i^2 -m_s\right) \ge
  \left(\sum_{i=1}^s m_i\right)^2.
\end{eqnarray}

First, suppose that the conditions required for \eqref{ineq}
hold. That is:  $m_1 \ge 2$, $s \ge 3$ or if $s=2$ then
$(m_1,m_2) \ne (2,2)$. 

We have $(L\cdot C)^2 \ge L^2 C^2$, by the Hodge Index
Theorem. Hence the inequalities \eqref{el} and \eqref{ineq} give 
$$L\cdot C \ge \sqrt{L^2}\left(\sum_{i=1}^s m_i\right)
\sqrt{\frac{s+2}{s(s+3)}} \ge \left(\sum_{i=1}^s m_i\right) \sqrt{\frac{L^2}{r}}\sqrt{\frac{r+2}{r+3}}.$$
Thus $$\varepsilon(X,L,r) = \frac{L\cdot C}{\sum_{i=1}^s m_i} \ge \sqrt{\frac{r+2}{r+3}} \sqrt{\frac{L^2}{r}}.$$

Next, suppose that $s=2$ and $(m_1,m_2) = (2,2)$. In this case, we use
an inequality that is stronger than \eqref{el}. With the notation as in
\eqref{el}, if $m_1 \ge 2$, then we have
\begin{eqnarray}\label{el-improved}
C^2 \ge
m_1^2+\ldots+m_s^2-m_1+\text{gon}(\tilde{C}).
\end{eqnarray}

Here $\tilde{C}$ is the normalization of $C$ and gon$(\tilde{C})$ is
the {\it gonality} of $\tilde{C}$ which is defined to be the least
degree of a covering $\tilde{C} \to \proj^1$. See \cite[Lemma
2.1]{Bas} and \cite[Theorem A]{KSS} for the single-point case and
\cite[Lemma 2.12]{Far} for the multi-point case. 

Since gon$(\tilde{C})$
is a positive integer, 
\eqref{el-improved} gives
$C^2 \ge 7$ in our situation. Now, using the Hodge Index Theorem as
above, we get $$\varepsilon(X,L,r) = \frac{L\cdot C}{4} \ge \sqrt{L^2}\sqrt{7/16} \ge \sqrt{\frac{r+2}{r+3}}\sqrt{\frac{L^2}{r}}.$$
The last inequality holds because 
$\frac{r+2}{r(r+3)} \le 7/16$ for $r \ge 2$.

Finally, let $s=1$ and $m=m_1$. 
By \eqref{el},
$C^2 \ge m^2-m$. Since $r \ge 2$, by hypothesis, we have
$\frac{r+2}{r(r+3)} \le 2/5$. Further, $m^2-m \ge 2m^2/5$ for
$m\ge 2$. So by the Hodge Index Theorem as above, we have
$$\varepsilon(X,L,r) = \frac{L\cdot C}{m} \ge \sqrt{L^2} \sqrt{2/5} \ge \sqrt{\frac{r+2}{r+3}} \sqrt{\frac{L^2}{r}}.$$

 For the last statement of the theorem, see Example
\ref{rational-ruled}. 
\end{proof}

\begin{example}\label{rational-ruled} In this example, we show that
  for every $r \ge 2$, there exists  a polarized
surface $(X,L)$ such that $\varepsilon(X,L,r) <
\sqrt{\frac{r+2}{r+3}}\sqrt{\frac{L^2}{r}}$.

For $r=2$, take $(X,L) = (\proj^2, \str_{\proj^2}(1))$. Then
$\varepsilon(X,L,2)=1/2 < \sqrt{4/5\cdot 1/2}$.

Let $r \ge 3$. Choose integers $n > e \ge  0$ such that
$r=2n-e+1$. 
Let $X$ be the ruled surface $\proj(\str_{\proj^1}\oplus
\str_{\proj^1}(-e))$ with a normalized section
$C_0$ and a fibre $f$. Let $\pi: X \to \proj^1$ be the canonical map. 
Let $L = C_0+nf$. Then $L$ is very ample by \cite[Chapter V, Theorem 2.17]{Har-AG}.

We have $L^2 = 2n-e=r-1$. 
Note that $K_X = -2C_0+(-2-e)f$ and
$H^1(X,L) = H^1(\proj^1,\pi_{\star}(L))=0$. 
Now it is easy to calculate, using Riemann-Roch, that
$h^0(X,L)=r+1$. Hence given any $r$ points $x_1,\ldots,x_r \in X$,
there is an effective divisor $D$ passing through $x_1,\ldots,x_r$
such that $D$ is linearly equivalent to $L$. 

So  $\varepsilon(X,L,r) \le \frac{r-1}{r} <
\sqrt{\frac{r+2}{r+3}}\sqrt{\frac{L^2}{r}}$. In fact, we claim that 
$\varepsilon(X,L,r) = \frac{r-1}{r}$. First note that, by Theorem \ref{main-general}, $\varepsilon(X,L,r)$
is computed by a curve $C$ passing through $s \le r$ very general points
with multiplicity one each. Then $C^2 \ge s-1$, by \eqref{el}. 
If $s=1$ then $\varepsilon(X,L,r) = L \cdot C \ge
1$, which is impossible since $\varepsilon(X,L,r) \le \frac{r-1}{r}
<1$. So let $s \ge 2$.
By the Hodge Index Theorem, $L\cdot C \ge \sqrt{L^2 C^2}\ge
\sqrt{(r-1)(s-1)}$.
Thus $$\varepsilon(X,L,r) = \frac{L\cdot C}{s} \ge \sqrt{\frac{(r-1)(s-1)}{s^2}} \ge
  \frac{r-1}{r}.$$

Note that the embedding of $X$ in
$\proj^r$ determined by $L$ is a rational normal scroll. This
example is also discussed in \cite[Example 4.2]{SS}.  In subsection 
\ref{rational-ruled-section}, we study Seshadri constants 
on rational ruled surfaces in more detail. 
\end{example}

\begin{remark}\label{compare}
We may compare our Theorem \ref{main-general} with \cite[Theorem
4.1]{SS}. This result says that if $\varepsilon(X,L,r) <
\sqrt{\frac{r-1}{r}}\sqrt{\frac{L^2}{r}}$, then $X$ has a fibration by
Seshadri curves. 
In situations where the existence of a Seshadri curve $C$ passing
through $s\le r$ points with multiplicity one each 
can be ruled out (see Remark \ref{condition1} for one such instance), the bound $\varepsilon(X,L,r) \ge
\sqrt{\frac{r+2}{r+3}}\sqrt{\frac{L^2}{r}}$ holds, by Theorem
\ref{main-general}. 
This is a better
bound than the one given in \cite[Theorem 4.1]{SS}.  
\end{remark}

\begin{remark}\label{condition1}
Suppose that for a polarized surface $(X,L)$ and an integer $r\ge 2$,
we have $\varepsilon(X,L,r) <
\sqrt{\frac{r+2}{r+3}}\sqrt{\frac{L^2}{r}}$. Then by Theorem
\ref{main-general}, $\varepsilon(X,L,r) = \frac{L\cdot C}{s}$ for a curve
$C$ passing through $s \le r$ very general points with multiplicity one
each.  Then $C$ must satisfy $C^2 < s$, as we show now. 
Suppose that $C^2 \ge s > 0$. 
By the Hodge Index Theorem,
$L\cdot C \ge \sqrt{L^2C^2}$. So
$\varepsilon(X,L,r) = \frac{L\cdot C}{s} \ge
\sqrt{\frac{L^2}{r}}\sqrt{\frac{C^2}{s}} \ge
\sqrt{\frac{L^2}{r}}$. So $\varepsilon(X,L,r)$ attains the maximal
possible value and this contradicts our assumption. 

In view of this remark, it is useful to investigate 
the following property:
If an irreducible and reduced curve $C$ on $X$
 passes through $r$ 
very general points, then $C^2 \ge r$.

If we have some information about curves $C$ with the above property
on a surface $X$, it may
be possible to establish that lower the bound in Theorem \ref{main-general}
holds. For instance, see Example \ref{k3}, Lemma \ref{rational-curve} and 
Theorem \ref{rational-ruled-seshadri}. 
\end{remark}

\begin{example} \label{k3}
Let $X$ be a K3 surface. Then  $h^1(\str_X) = 0$, $h^2(\str_X) =1$ and $K_X=\str_X$.
Let $C$ be an irreducible curve on $X$. Then $h^2(\str_X(C))=0$. 
Further, $h^1(\str_X(C))
=h^1(\str_X(-C))$, by Serre duality. Taking cohomology of the
exact sequence $0 \to \str_X(-C) \to \str_X \to \str_C \to 0,$ we
see that $h^1(\str_X(-C))=0$. For more details on K3 surfaces, see 
\cite[Chapter VIII]{BHPV} or \cite[Chapter VIII]{B}. 

So if $C$ is an irreducible curve on $X$, then $h^0(\str_X(C)) = \frac{C^2}{2}+2$, by Riemann-Roch. 
It is easy to see that $C$ passes through $r$ very general points if and
only if $h^0(\str_X(C)) \ge r+1$. Thus if $C$ passes through 
$r$ very general points, then $C^2 \ge 2r-2$. So if $r\ge 2$, it follows
that $C^2 \ge r$. 

Now let $L$ be an ample line bundle on $X$ and  let $r \ge L^2$. 
Suppose that there exists a Seshadri curve for $L$ 
passing through $s \le r$ very general points with multiplicity one each. 
If $s\ge 2$, then by the argument in the previous paragraph 
$C^2 \ge s$. So Remark \ref{condition1} shows that 
$\varepsilon(X,L,r) = \sqrt{\frac{L^2}{r}}$. If $s=1$, then 
$\varepsilon(X,L,r) = L \cdot C \ge 1$. On the other hand, 
$\sqrt{\frac{r+2}{r+3}}\sqrt{\frac{L^2}{r}} < 1$, if $r \ge L^2$.

Hence for an ample line bundle $L$ on a K3 surface $X$, we have
$\varepsilon(X,L,r) \ge \sqrt{\frac{r+2}{r+3}}\sqrt{\frac{L^2}{r}}$,
for $r \ge \text{max}\{L^2, 2\}$, by 
Theorem \ref{main-general} and Remark \ref{condition1}.
\end{example}

\section{Ruled surfaces}\label{ruled-surfaces}
Let $C$ be a smooth curve and let $\pi: X \to C$ be a ruled surface
over $C$. We choose a normalized vector bundle $E$ of rank 2 on $C$
such that $X \cong \mathbb{P}(E)$. Let $e = -{\rm deg}(E)$. 
Let $C_0$ be the image of a section of $\pi$ such that $C_0^2 = -e$
and let $f$ be a fibre of $\pi$. Then
the Picard group of $X$ modulo numerical equivalence is 
a free abelian group of rank 2 
generated by $C_o$ and $f$. We have $f^2=0$ and $C_0 \cdot f = 1$. 

A complete characterization of ample line bundles on $X$ is known. For
this, and other details on ruled surfaces, we refer to \cite[Chapter
V, Section
2]{Har-AG}. 

In this section, we consider a ruled surface $\pi: X \to C$ and an
ample line bundle $L$ on $X$. For an integer $r \ge 1$ and points
$x_1,\ldots,x_r \in X$, we are
interested in the problem of computing the 
Seshadri constants $\varepsilon(X,L,x_1,\ldots,x_r)$, or obtaining lower
bounds for the general Seshadri constant $\varepsilon(X,L,r)$. 

\subsection{The case $r\le e$} ~

Let $X \to C$ be a ruled surface with invariant $e \in
\ints$. In this subsection, we will assume that $1 \le r \le e$.  We
are primarily motivated by \cite[Theorems 4.1, 4.2]{Fue} and 
\cite[Theorem 3.27]{Syz} which compute the single-point
Seshadri constants on ruled surfaces. 
Our main result in this case is Theorem \ref{r<e}, 
which generalizes these results 
to the multi-point case when $e > 0$.

\begin{theorem}\label{r<e}
  Let $X$ be a ruled surface with invariant $e>0$. Fix a positive integer $r \le e$.
Let $L$ be an ample
line bundle on $X$ and let $x_1,\ldots,x_r \in X$.  Denote by $t$ the
maximum number
of points among $x_1,\ldots,x_r$ that lie on a single fibre $f$ and by $s$
the number of points among $x_1,\ldots,x_r$ lying on $C_0$. 

Then the Seshadri
constant $\varepsilon(X,L,x_1,\ldots,x_r) = {\rm min}\left\{\frac{L\cdot
      f}{t},\frac{L\cdot C_o}{s}\right\}$, if $s > 0$. Otherwise, 
$\varepsilon(X,L,x_1,\ldots,x_r) =\frac{L\cdot f}{t}$.
\end{theorem}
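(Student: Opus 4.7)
The plan is to prove matching upper and lower bounds on $\varepsilon(X,L,x_1,\ldots,x_r)$. For the upper bound, note that a specific fibre containing $t$ of the points yields the Seshadri ratio $(L\cdot f)/t$, and when $s\geq 1$ the smooth section $C_0$ passes through $s$ of the points with multiplicity one and contributes $(L\cdot C_0)/s$. Taking the infimum over these test curves yields $\varepsilon(X,L,x_1,\ldots,x_r)\leq \min\{(L\cdot f)/t,(L\cdot C_0)/s\}$, or $\leq (L\cdot f)/t$ when $s=0$.

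For the lower bound I would consider an arbitrary irreducible reduced curve $D$ meeting $\{x_1,\ldots,x_r\}$ and show that $(L\cdot D)/\sum_i m_{x_i}(D)$ is at least this minimum. The cases $D=C_0$ and $D$ a fibre follow immediately from the upper-bound computation. Otherwise, by the classification of curves on ruled surfaces (Hartshorne, V, \S2), $D$ is numerically equivalent to $aC_0+bf$ with $a\geq 1$; using $e>0$ and $D\cdot C_0\geq 0$ gives $b\geq ae$. The Bezout-style bounds $\sum_{x_i\in C_0} m_{x_i}(D)\leq D\cdot C_0 = b-ae$ and $\sum_{x_i\in f_j} m_{x_i}(D)\leq D\cdot f_j = a$ for every fibre $f_j\neq D$ are the key inputs.

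Since $C_0\cdot f=1$, the $s$ points on $C_0$ lie on $s$ distinct fibres, and the remaining $r-s$ non-$C_0$ points lie on at most $r-s$ further fibres. Combining the two intersection bounds then yields $\sum_i m_{x_i}(D)\leq (b-ae)+(r-s)a$. Writing $\alpha=L\cdot f$ and $\beta=L\cdot C_0$, one has $L\cdot D=a\beta+b\alpha$, so the desired inequality $(L\cdot D)/\sum_i m_{x_i}(D)\geq \min\{\alpha/t,\beta/s\}$ reduces to an elementary estimate in $a,b,\alpha,\beta,r,s,t,e$ under the numerical constraints $b\geq ae$, $r\leq e$, and $t\geq 1$. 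I would split on whether this minimum equals $\alpha/t$ or $\beta/s$ and verify each case separately.

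The main obstacle is the case $\min=\beta/s$: here the required estimate $(r-s)a\beta\leq sb\alpha$ does not follow from the numerical constraints alone, and one must use $b\geq ae$, $r\leq e$, and the ratio hypothesis $\beta/s<\alpha/t$ together, likely after a further split on the sign of an expression such as $a(2s-r+e)-b$. The other case, $\min=\alpha/t$, is essentially direct from $b\geq ae\geq ar$ and does not require the ratio hypothesis at all. The hypothesis $e>0$ is used both to obtain $b\geq ae$ from irreducibility and to ensure $r\leq e$ is a nontrivial constraint.
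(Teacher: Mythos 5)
Your overall strategy is the same as the paper's: the upper bound comes from testing against a fibre through $t$ of the points and against $C_0$, and the lower bound comes from classifying every other irreducible curve numerically as $D\equiv aC_0+bf$ with $a\ge 1$, $b\ge ae$, bounding the multiplicities by Bezout against fibres (and, in your version, also against $C_0$), and invoking $r\le e$. Your multiplicity bound $\sum_i m_{x_i}(D)\le (b-ae)+(r-s)a$ is a slight refinement of the paper's cruder bound $\sum_i m_{x_i}(D)\le ra$; both suffice.

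The one genuine problem is your final case analysis, and it is self-inflicted. To prove $\frac{L\cdot D}{\sum_i m_i}\ge\min\{\alpha/t,\beta/s\}$ it is enough to prove the single inequality $\frac{L\cdot D}{\sum_i m_i}\ge\alpha/t$, since $\alpha/t$ dominates the minimum regardless of which term achieves it; one never needs the comparison with $\beta/s$. Your own ``essentially direct'' computation already gives $\frac{L\cdot D}{\sum_i m_i}\ge\alpha/t$ unconditionally:
$t(L\cdot D)\ge a\beta+b\alpha\ge b\alpha\ge\alpha\bigl((b-ae)+(r-s)a\bigr)\ge\alpha\sum_i m_i$,
using only $t\ge1$, $a,\beta>0$ and $r-s\le e$. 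Hence the case $\min=\beta/s$, which you flag as the ``main obstacle'' and leave unresolved with only a conjectural further split, is vacuous and should simply be deleted. This is exactly how the paper argues: it proves only $\frac{L\cdot D}{\sum_i m_i}\ge\frac{L\cdot f}{t}$ for every irreducible $D$ other than $C_0$ and the fibres, and never attempts to beat $\frac{L\cdot C_0}{s}$. With that observation your argument closes and is correct.
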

\begin{proof}
Let $L = aC_0+bf$. By \cite[Chapter V, Proposition 2.20]{Har-AG}, $a>0$ and $b
> ae$. Let $D=\alpha C_0 + \beta f$ be any irreducible and reduced
curve on $X$ such that
$D \ne C_0$ and $D\ne f$. 
In this case, we have $\alpha > 0$ and $\beta \ge \alpha e$, by the
same reference as above.  

Suppose  that $D$ passes through 
$x_1,\ldots, x_r$ with multiplicities $m_1,\ldots,m_r$. Assume that
$m := \sum_{i=1}^r m_i > 0$.  
We will show that $\frac{L\cdot D}{m} \ge \frac{L\cdot f}{t}$
and the theorem will follow.

Since $D\ne f$, $\alpha = D\cdot f \ge m_i$ for all $i$. This follows
by considering a fibre through the point $x_i$. Hence $r\alpha \ge
m$. Note that $b \ge ae+1$ and $\beta \ge \alpha e$. So 

$$\frac{L\cdot D}{m} = \frac{-a\alpha e +\alpha b + a\beta}{m} 
\ge \frac{\alpha +
  a\alpha e}{m} \ge \frac{\alpha(1+ae)}{m} \ge \frac{ae+1}{r}.$$

Since $e \ge r$,  $\frac{L\cdot D}{m}  \ge \frac{ae+1}{r} \ge
\frac{a}{t} = \frac{L\cdot f}{t}$.
\end{proof}
\begin{corollary}\label{cor-r<e}
With $X, e, x_1,\ldots,x_r,t$ and $s$ as in Theorem \ref{r<e}, let $L
= aC_0+bf$ be an ample line bundle
on $X$ such that $b \ge 2ae+1$. Then 
$\varepsilon(X,L,x_1,\ldots,x_r) =\frac{L\cdot f}{t}$.
\end{corollary}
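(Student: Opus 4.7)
The plan is to reduce the statement directly to Theorem \ref{r<e}. That theorem already pins down the Seshadri constant as $\min\{L\cdot f/t,\ L\cdot C_0/s\}$ when $s>0$, and as $L\cdot f/t$ when $s=0$. So the only case requiring work is $s>0$, where I must show that the minimum is realized by the fibre term, i.e.\ that
\[
\frac{L\cdot f}{t}\ \le\ \frac{L\cdot C_0}{s}.
\]

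First I would compute the two intersection numbers explicitly in the basis $\{C_0,f\}$: $L\cdot f = a$ and $L\cdot C_0 = b - ae$ (using $C_0^2=-e$, $f^2=0$, $C_0\cdot f = 1$). The desired inequality then becomes $as \le t(b-ae)$.

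Next I would bound each side using the three hypotheses. On the left, since $s \le r \le e$, we have $as \le ae$. On the right, since at least one of the points $x_i$ lies on $C_0$ (because $s>0$), at least one fibre contains one of the marked points, so $t\ge 1$; combined with the standing hypothesis $b\ge 2ae+1$, this gives $t(b-ae)\ge b-ae \ge ae+1$. Chaining these yields $as \le ae < ae+1 \le t(b-ae)$, which is the required strict inequality.

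There is really no main obstacle here — the content of the corollary is entirely in the hypothesis $b\ge 2ae+1$, which is precisely what is needed to push the fibre-slope below the section-slope, and everything else is packaged into Theorem \ref{r<e}. The only mild subtlety is remembering that $s>0$ forces $t\ge 1$, so the comparison of the two candidates in the $\min$ is legitimate.
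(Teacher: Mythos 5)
Your proof is correct and follows essentially the same route as the paper: reduce to Theorem \ref{r<e} and show that under $b \ge 2ae+1$ the fibre term realizes the minimum. The only cosmetic difference is that the paper compares both quantities to the intermediate value $\frac{ae+1}{r}$ (reusing an inequality from the proof of Theorem \ref{r<e}) and argues by contradiction, whereas you verify $as \le t(b-ae)$ directly from $s \le r \le e$ and $t \ge 1$; both arguments rest on exactly the same hypotheses.
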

\begin{proof}
Assume that $s >0$, as otherwise there is nothing to prove. 
We will prove that $\frac{L\cdot C_0}{s} \ge \frac{L\cdot f}{t}$.  
We saw above in the proof of Theorem \ref{r<e} that $ \frac{ae+1}{r} \ge \frac{L\cdot
  f}{t}$. So it suffices to show that 
$\frac{ae+1}{r} \le \frac{L\cdot C_0}{s}$. 

If not, we have $\frac{ae+1}{r}  > \frac{L\cdot C_0}{s}$, which
implies that $s >\left(\frac{b-ae}{ae+1}\right)r \ge r$, since $b \ge
2ae+1$. But this is absurd, since there are only $r$ points. 
\end{proof}

Given a positive rational number $q$, we show next that there exists
a surface $X$ and an ample line bundle $L$ on $X$ such that
for $r$ sufficiently large, there are points $x_1,\ldots,x_r\in X$ with
$\varepsilon(X,L,x_1,\ldots,x_r)=q$.
In particular, this shows that multi-point Seshadri constants for
arbitrarily large $r$ can be
arbitrarily small. Compare this with Miranda's example \cite[Example
5.2.1]{L} which shows
that single-point Seshadri constants can be arbitrarily small.

\begin{corollary}\label{any-q}
Let $q = \frac{a}{t}$ be a rational number with $a,t > 0$. Then there
exist a polarized ruled surface $(X,L)$, an integer $r$ and points
$x_1,\ldots,x_r \in X$ such that $\varepsilon(X,L,x_1,\ldots,x_r) = q$.
\end{corollary}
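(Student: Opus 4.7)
The plan is to reduce the statement directly to Theorem \ref{r<e} by making every parameter in that theorem cooperate. Write $q = a/t$ with $a,t>0$. I would choose a Hirzebruch-style ruled surface with invariant $e$ as large as needed, specifically $e \ge t$: for instance take $X = \proj(\str_{\proj^1}\oplus \str_{\proj^1}(-e))$ fibered over $\proj^1$, with normalized section $C_0$ (satisfying $C_0^2=-e$) and fiber class $f$. Pick any integer $b>ae$ so that $L = aC_0 + bf$ is ample on $X$, by the ampleness criterion \cite[Chapter V, Proposition 2.20]{Har-AG} cited in the proof of Theorem \ref{r<e}.

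Next I would set the number of points to be $r=t$, and place all of the $x_1,\dots,x_r$ as distinct points on a single fiber $f_0$, chosen so that none of them lies on $C_0$. This is possible because $f_0 \cong \proj^1$ meets $C_0$ in a single point, leaving infinitely many points from which to choose $t$ distinct ones. With this configuration the parameters appearing in Theorem \ref{r<e} are $s=0$ (no point lies on $C_0$) and the maximum number of points on a single fiber equals $t$. Since $r = t \le e$, the hypothesis $r \le e$ of Theorem \ref{r<e} is satisfied.

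Finally I would apply Theorem \ref{r<e} in the case $s=0$ to conclude
\[
\varepsilon(X,L,x_1,\dots,x_r) \;=\; \frac{L\cdot f}{t}.
\]
Using $f^2=0$ and $C_0\cdot f = 1$, one computes $L\cdot f = a$, so $\varepsilon(X,L,x_1,\dots,x_r) = a/t = q$, as required.

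There is no real obstacle here: the whole argument is a book-keeping verification that Theorem \ref{r<e} applies with the correct numerical data. The only small points to double-check are that $e$ can genuinely be chosen large enough (any $e\ge t$ works, and such a ruled surface with invariant $e>0$ exists), that $L$ is ample for the chosen $b$, and that the points on the fiber can be kept off $C_0$; each of these is immediate.
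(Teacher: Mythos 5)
Your proof is correct and follows essentially the same route as the paper: both take a Hirzebruch surface with invariant $e\ge t$, set $L=aC_0+bf$, place $t$ points on a single fibre, and invoke Theorem \ref{r<e}. The only cosmetic difference is that you force $s=0$ by keeping the points off $C_0$ and use the $s=0$ case of the theorem directly, whereas the paper instead chooses $b=2ae+1$ so that Corollary \ref{cor-r<e} applies regardless of $s$.
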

\begin{proof}
Let $r=e=t$. Consider a
ruled surface $X$ with invariant $e$. For specificity, we may consider
the rational ruled surface $\proj(\str_{\proj^1}\oplus
\str_{\proj^1}(-e))$. Let $L = aC_0+(2ae+1)f$. Then $L$ is ample. 
Choose points $x_1,\ldots,x_r$ on $X$ so that the maximum number of
points lying on a single fibre is $t$.
Then, by Corollary \ref{cor-r<e}, $\varepsilon(X,L,x_1,\ldots,x_r) = a/t$.
\end{proof}

Note that in the above corollary, $L^2 = 3a^2e+2a$, so that
$\sqrt{\frac{L^2}{r}} \ge a \ge 1$. Thus the smallness of the Seshadri
  constant $\varepsilon(X,L,x_1,\ldots,x_r)$ is {\it not} due to the smallness of
  $\sqrt{\frac{L^2}{r}}$. Contrast this with the case of $(\proj^2,\str_{\proj^2}(1))$. 

While the multi-point Seshadri constant $\varepsilon(X,L,x_1,\ldots,x_r)$ can be arbitrarily small at
special points $x_1,\ldots,x_r$, its value at very general points is
always $L\cdot f$, as we show now. 

\begin{corollary}\label{general-r<e}
Let $X$ be a ruled surface with invariant $e$ and let $1 \le r \le e$. For
an ample line bundle $L$, one has $\varepsilon(X,L,r) = L\cdot f$. 
\end{corollary}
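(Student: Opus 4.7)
The plan is to derive this directly from the pointwise computation of Theorem \ref{r<e}, by identifying a specific configuration of $r$ points that maximizes the Seshadri constant and by establishing a matching upper bound using fibres.

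First I would prove the upper bound $\varepsilon(X,L,r) \le L \cdot f$. Let $x_1,\ldots,x_r \in X$ be any $r$ distinct points, and let $f_1$ denote the fibre of $\pi$ through $x_1$. Then $f_1$ is an irreducible and reduced curve passing through $x_1$ with multiplicity one, and $L \cdot f_1 = L \cdot f$ since all fibres are numerically equivalent. The defining infimum for the Seshadri constant therefore gives
\[
\varepsilon(X,L,x_1,\ldots,x_r) \le \frac{L \cdot f_1}{\sum_{i=1}^r \mathrm{mult}_{x_i}(f_1)} \le \frac{L\cdot f}{1} = L\cdot f.
\]
Taking the supremum over configurations yields $\varepsilon(X,L,r) \le L\cdot f$.

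For the reverse inequality I would exhibit a configuration realising this value. Choose $r$ points $x_1,\ldots,x_r$ so that each $x_i$ lies on a distinct fibre of $\pi$ and none of the $x_i$ lies on $C_0$. This is possible because the fibres of $\pi$ form a one-parameter family and $C_0$ is a proper subvariety, so the set of such configurations is Zariski open and dense in $X^r$. In the notation of Theorem \ref{r<e}, this choice gives $t = 1$ and $s = 0$. Since $1 \le r \le e$, Theorem \ref{r<e} applies and computes
\[
\varepsilon(X,L,x_1,\ldots,x_r) = \frac{L \cdot f}{t} = L\cdot f,
\]
so $\varepsilon(X,L,r) \ge L \cdot f$, and combining the two inequalities proves the corollary.

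No substantive obstacle remains once Theorem \ref{r<e} is available; the argument is essentially bookkeeping, and the key observation is simply that a generic configuration of at most $e$ points avoids $C_0$ and lies on pairwise distinct fibres, which forces the minimum in Theorem \ref{r<e} to be achieved by the fibre contribution $L \cdot f$.
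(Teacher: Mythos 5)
Your proposal is correct and follows essentially the same route as the paper: both reduce to Theorem \ref{r<e} applied to a configuration with $t=1$ and $s=0$, where the minimum collapses to $L\cdot f$. The only cosmetic difference is that you establish the upper bound $\varepsilon(X,L,x_1,\ldots,x_r)\le L\cdot f$ explicitly for every configuration and then exhibit an attaining one, whereas the paper invokes the fact that $\varepsilon(X,L,r)$ is attained at very general points (which automatically avoid $C_0$, since $C_0^2=-e<0$, and lie on distinct fibres); both are valid.
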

\begin{proof}
Note that $\varepsilon(X,L,r)$ is attained at $r$ very general points
$x_1,\ldots,x_r \in X$. Since $C_0^2 = -e < 0$, no very general point lies
on $C_0$. Let $f$ be the fibre through $x_1$.  Since the points
$x_1,\ldots,x_r$ are very general, $x_i \notin f$ for $i=2,\ldots,r$.
Thus in the notation of Theorem \ref{r<e}, $s=0$ and
$t=1$. Hence $\varepsilon(X,L,r) = \varepsilon(X,L,x_1,\ldots,x_r)= L \cdot f$. 
\end{proof}

\begin{remark}
Let $X$ be a ruled surface with invariant $e$ and let $1 \le r \le
e$. Then it follows from Corollary \ref{general-r<e} that the Seshadri
constant $\varepsilon(X,L,r)$ is sub-maximal for {\it any} ample line
bundle $L$ on $X$. Indeed, let $L = aC_0+bf$. Since $L$ is ample,
$b>ae$. So $L^2 = -a^2e+2ab = a(2b-ae) > a^2e$. So $\frac{L^2}{r} >
\frac{a^2e}{r} \ge a^2$. Hence $a = \varepsilon(X,L,r) < \sqrt{L^2/r}$. 

For any $r$, if $e\gg r$, then $\varepsilon(X,L,r)=a$ would be very small
compared to $\sqrt{\frac{L^2}{r}} = \sqrt{\frac{2ab-a^2e}{r}}$;
in particular, smaller than
$\sqrt{\frac{r-1}{r}}\sqrt{\frac{L^2}{r}}$. By \cite[Theorem 4.1]{SS},
it follows that $X$ is fibred by Seshadri curves.  Of course, in this
particular case this is obvious, since the fibres $f$ are Seshadri
curves. 

\end{remark}

\subsection{Rational ruled surfaces} \label{rational-ruled-section}

In this subsection, we will consider rational ruled surfaces and
clarify the bound given in Theorem \ref{main-general}. 

Let $X$ be a ruled surface over $\proj^1$. We fix a normalized vector
bundle $E = \str_{\proj^1} \oplus \str_{\proj^1}(-e)$ such that
$X = \proj(E)$ and $e \ge 0$. Denote by $C_0$ a section with $C_0^2 = -e$ and
let $f$ be a fibre.

First, we make the following observation. 

\begin{lemma}\label{rational-curve}
Let $X$ be a rational ruled surface. Let $C$ be an irreducible curve
on $X$ passing through $r\ge 1$ very general points. If $C^2 < r$, then
$C^2=r-1$ and $C$ is a smooth rational curve. 
\end{lemma}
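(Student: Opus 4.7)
The plan is to convert the hypothesis ``$C$ passes through $r$ very general points'' into the cohomological lower bound $h^0(X, \str_X(C)) \ge r+1$, and then match this against Riemann--Roch plus cohomology vanishing on the rational surface $X$. The two bounds together will force $p_a(C) = 0$ and $C^2 = r-1$; smooth rationality will then follow because $C$ is irreducible.

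First I would establish the cohomological lower bound by a dimension count on the incidence variety in $|C| \times X^r$: if $\dim |C| < r$, its image in $X^r$ is a proper closed subset, contradicting that some $r$-tuple of very general points lies on $C$. Next I would write $C \sim \alpha C_0 + \beta f$ and handle two degenerate cases: $C = C_0$ with $e > 0$ is rigid, so cannot pass through any very general point; and $C = f$ forces $r = 1$ with $C^2 = 0 = r-1$ and $C \cong \proj^1$, so the conclusion is already met. For all remaining irreducible curves, \cite[Chapter V, Proposition 2.20]{Har-AG} yields $\alpha \ge 1$ and $\beta \ge \alpha e$.

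With these bounds in hand, I would establish $h^1(\str_X(C)) = h^2(\str_X(C)) = 0$. Serre duality identifies $h^2$ with $h^0(\str_X(K_X - C))$, which vanishes since $K_X - C$ has a strictly negative $C_0$-coefficient while every effective divisor on $X$ has nonnegative $C_0$-coefficient. For $h^1$ I would push forward via $\pi : X \to \proj^1$: the restriction of $\str_X(C)$ to any fiber has positive degree, so $R^1\pi_*\str_X(C) = 0$, and $\pi_*\str_X(C) = \bigoplus_{i=0}^{\alpha} \str_{\proj^1}(\beta - ie)$ has every summand of degree $\ge 0$ thanks to $\beta \ge \alpha e$, so its $H^1$ also vanishes. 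The Leray spectral sequence then gives $h^1 = 0$. Riemann--Roch, using $\chi(\str_X) = 1$ and adjunction, then yields the clean identity $h^0(X, \str_X(C)) = C^2 - p_a(C) + 2$.

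Combining with $h^0 \ge r+1$ gives $C^2 \ge r - 1 + p_a(C)$; together with the hypothesis $C^2 \le r-1$ this forces $p_a(C) \le 0$, and since $C$ is irreducible $p_a(C) \ge 0$, so $p_a(C) = 0$ and $C^2 = r-1$. An irreducible curve of arithmetic genus zero must be smooth and rational, completing the argument. The main place to be careful will be the $H^1$-vanishing step: I need the inequality $\beta \ge \alpha e$ to hold for every irreducible curve outside $\{C_0, f\}$, and for $e = 0$ I would need a symmetric treatment of the two rulings so that $C = C_0$ is not misclassified as a separate rigid case.
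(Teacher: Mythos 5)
Your proposal is correct, and it takes a noticeably more self-contained route than the paper. The paper first invokes the Ein--Lazarsfeld/Xu moving-curve inequality \eqref{el} to get $C^2\ge r-1$ (hence $C^2=r-1$), then uses \eqref{el} a second time to pin down $h^0(X,\str_X(C))=r+1$ exactly, and only then applies Riemann--Roch to compute $K_X\cdot C=-r-1$ and adjunction to conclude $p_a(C)=0$. You bypass \eqref{el} entirely: from the same vanishings $h^1=h^2=0$ (which you justify by the same push-forward to $\proj^1$, just spelled out via $\pi_*\str_X(C)=\bigoplus_{i=0}^{\alpha}\str_{\proj^1}(\beta-ie)$ and Serre duality for $h^2$) you get the clean identity $h^0=C^2-p_a(C)+2$, and the single inequality $h^0\ge r+1$ then yields $C^2\ge r-1+p_a(C)$, which against $C^2\le r-1$ forces both conclusions at once. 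What your approach buys is independence from the deformation-theoretic input \cite{EL,X1}; what the paper's buys is brevity, since \eqref{el} is already set up and used elsewhere, and it avoids having to verify $\beta\ge\alpha e$ and the degenerate cases $C=C_0$, $C=f$ separately (though your handling of those cases, including the $e=0$ symmetry remark, is correct). One small point: your closing step ``irreducible of arithmetic genus zero implies smooth rational'' is the same adjunction-based conclusion the paper draws, just stated via $p_a(\tilde C)\le p_a(C)$ rather than computing $2p_a(C)-2=-2$ explicitly; no gap there.
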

\begin{proof}
By \eqref{el}, we have $C^2 \ge r-1$, in any case. So $C^2 = r-1$.
We also note that $C$ passes through $r$ very general
points if any only if $h^0(X,\str_X(C)) \ge r+1$. This follows from a
simple dimension count on the linear system $|C|$ and the fact the
very general points impose independent conditions. 
In our situation, note also that
$h^0(X,\str_X(C))=r+1$, because if $h^0(X,\str_X(C)) > r+1$, then $C$
passes through more than $r$ very general points and $C^2 \ge r$ by \eqref{el}.

Since $C$ is effective, $h^2(X,\str_X(C)) = 0$. Using the projection
formula to push-down $\str_X(C)$ to $\proj^1$, we see that $h^1(X,\str_X(C)) = 0$.
Now since $h^0(X,\str_X(C))=r+1$, the Riemann-Roch theorem gives 
$K_X \cdot C = -r-1$ and  
by adjunction, $2p_a(C)-2 = C^2+K_X\cdot C = -2$. Hence $p_a(C)=0$
and $C$ is a smooth rational curve. 
\end{proof}

\begin{lemma}\label{list-rational-curves}
Let $X$ be a rational ruled surface with invariant $e \ge 0$,
normalized section $C_0$ and a fibre $f$. 
Let $C=mC_0+nf$ be an irreducible smooth curve on $X$. Then $C$ is rational 
if and only if 
\begin{enumerate}
\item $C  = C_0$, or 
\item $C=f$, or 
\item $m=1, n > e$, or
\item $e > 0, m=1, n=e$, or  
\item $e=0,  m \ge 1, n=1$, or 
\item $e=1, m=n=2$.
\end{enumerate}
\end{lemma}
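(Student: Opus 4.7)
The plan is to apply the adjunction formula to compute the arithmetic genus of $C$ as a polynomial in $m$, $n$, and $e$, then solve for when it vanishes. Since $C$ is smooth and irreducible, arithmetic and geometric genera agree, so $C$ is rational if and only if $p_a(C)=0$.

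First I would dispose of the edge cases $C = C_0$ and $C = f$, both of which are smooth rational and give (1) and (2). For the remaining curves one may assume $m \ge 1$ and, by intersecting with $C_0$ (using $C \ne C_0$, so that $C\cdot C_0 \ge 0$), also $n \ge me$. These are the standard irreducibility constraints on a non-section, non-fibre irreducible curve on a ruled surface.

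Next, using $K_X = -2C_0 + (-2-e)f$ together with $C_0^2 = -e$, $C_0 \cdot f = 1$, $f^2 = 0$, I would compute
$$C^2 = -m^2 e + 2mn, \qquad K_X \cdot C = me - 2m - 2n,$$
so that adjunction yields
$$p_a(C) \;=\; (m-1)\left[(n-1) - \tfrac{em}{2}\right].$$
From this, $p_a(C) = 0$ splits into two subcases. If $m=1$, the genus vanishes for every $n$, and the constraint $n \ge e$ produces cases (3) and (4) (the boundary $n = e = 0$ is absorbed into (1)). If $m \ge 2$, we need $2(n-1) = em$, and combining this with $n \ge me$ gives $1 + em/2 \ge me$, i.e.\ $em \le 2$; the only possibilities with $m \ge 2$ are then $(e,n) = (0,1)$ with arbitrary $m \ge 2$ (case (5)) and $(e,m,n) = (1,2,2)$ (case (6)). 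The converse direction is immediate from the same genus computation together with verifying that each listed class contains a smooth irreducible representative.

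The main conceptual point, rather than a true obstacle, is carefully deploying the constraint $n \ge me$ to cut down the infinite solution set of $2(n-1)=em$ to the two finite families (5) and (6); without it one would wrongly record rational curves for arbitrarily large $e$. The arithmetic is otherwise routine.
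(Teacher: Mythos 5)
Your proposal is correct and follows essentially the same route as the paper: adjunction plus the numerical constraints ($m\ge 1$, $n\ge me$) on irreducible curves other than $C_0$ and $f$. The only difference is organizational — you factor the genus as $p_a(C)=(m-1)\bigl[(n-1)-\tfrac{em}{2}\bigr]$ up front and then split on $m=1$ versus $m\ge 2$, whereas the paper first splits on $n>me$ versus $n=me$ and extracts the same conclusions; your factorization is arguably a slightly cleaner bookkeeping of the identical computation.
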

\begin{proof}
In each of the cases listed above, it is easy to see, using the adjunction formula,
that $C$ is rational. 

For the converse, let $C$ be an
irreducible, smooth 
rational curve and suppose that $C \ne C_0, C\ne f$.
By \cite[Chapter V, Corollary 2.8]{Har-AG}, 
$m > 0, n > me$ or $e > 0, m > 0, n=me$. 

We have $C^2 = -m^2e+2mn$ and $K_X \cdot C = me-2n-2m$. 
By adjunction, 
\begin{eqnarray}\label{adjunction}
-2 = C^2+K_X\cdot C = -m^2e+me-2m+n(2m-2).
\end{eqnarray}

{\bf Case 1:} $n > me$. 

So  $-2 = -m^2e+me-2m+n(2m-2) \ge -m^2e+me-2m+(me+1)(2m-2) =
m^2e-me-2=me(m-1)-2$. So $0 \ge me(m-1)$. Since 
$m \ge 1, e\ge 0$, we have $m=1$ or $e=0$. If $m=1$, then we have case (3).

Let $e=0$.  Then $-2=2(mn-m-n)$, by \eqref{adjunction}. So either
$m=1$ or $n=1$.  If $m=1$, we have case (3). If $n=1$, we have case
(5). 

{\bf Case 2:} $n=me$. 

In this case, $e> 0$. By \eqref{adjunction}, $-2 = (m^2-m)e-2m$. Hence
$2m-2 = (m^2-m)e \ge m^2-m$. Hence $m=1$ or $m=2, e=1$. 
If $m=1$, we have case (4) and if $m=2, e=1$, we have case (6).
\end{proof}

Now we state our main theorem on rational ruled surfaces. 

\begin{theorem}\label{rational-ruled-seshadri}
Let $X$ be a rational ruled surface. Let $L$ be an ample
line bundle on $X$. 
Then $$\varepsilon(X,L,r) \ge
\sqrt{\frac{r+2}{r+3}}\sqrt{\frac{L^2}{r}}, \text{~ for ~} r \ge L^2+5.$$
\end{theorem}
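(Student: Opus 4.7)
The plan is to argue by contradiction. Assume $\varepsilon(X,L,r) < \sqrt{\frac{r+2}{r+3}}\sqrt{\frac{L^2}{r}}$ for some $r \ge L^2 + 5$. By Theorem~\ref{main-general}, there is a Seshadri curve $C$ passing through $s \le r$ very general points with multiplicity one each, and Remark~\ref{condition1} forces $C^2 < s$. Lemma~\ref{rational-curve} then sharpens this to $C^2 = s - 1$ with $C$ smooth rational, and Lemma~\ref{list-rational-curves} narrows $C$ to one of the six explicit types listed. Since any ample $L = aC_0 + bf$ on a rational ruled surface satisfies $L^2 = a(2b - ae) \ge 2$, we always have $r \ge 7$.

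I would then split into cases according to $s$. If $s = 1$ (Cases (1)--(2) of Lemma~\ref{list-rational-curves}), then $\varepsilon = L \cdot C \ge 1$ by ampleness, while the assumed upper bound is strictly less than $1$, a contradiction. If $2 \le s \le r - 1$, the Hodge Index Theorem combined with $C^2 = s - 1$ yields $\varepsilon \ge \sqrt{L^2(s-1)}/s$, and an elementary check shows $(s-1)/s^2 \ge (r+2)/(r(r+3))$ whenever $r \ge s + 1$ and $r \ge 4$ (both of which hold since $r \ge 7$), again a contradiction.

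The real work is the remaining case $s = r$. Case (4) of Lemma~\ref{list-rational-curves} is impossible because $r = e + 1 \ge L^2 + 5 \ge a^2 e + 2a + 5$ would force $e(1 - a^2) \ge 2a + 4$, which fails for every $a \ge 1$; Case (6) forces $r = 5 \ge L^2 + 5$, contradicting $L^2 > 0$. Cases (3) and (5) are interchanged by the $C_0 \leftrightarrow f$ symmetry on $\proj^1 \times \proj^1$ when $e = 0$, so it suffices to treat Case (3). Writing $C = C_0 + nf$ with $n = (r + e - 1)/2$ and eliminating $b$ via $L^2 = 2ab - a^2 e$ gives
\[
L \cdot C = \frac{a^2(r - 1) + L^2}{2a},
\]
so the desired inequality $(L \cdot C)^2 (r+3) \ge r(r+2) L^2$ becomes a quadratic inequality in $t = a^2$:
\[
(r-1)^2 (r+3)\, t^2 - 2(r^2 + 2r + 3) L^2\, t + (r+3) L^4 \ge 0,
\]
with discriminant $48 L^4 r(r+2)$. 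The crucial step is to verify that its larger root $t_+$ is strictly less than $1$ whenever $r \ge L^2 + 5$; after one squaring this reduces to a polynomial inequality in $r$ which is elementary but slightly tedious. Once $t_+ < 1$ is established, $a^2 \ge 1 > t_+$ places $a^2$ outside the interval between the roots, the quadratic is non-negative, and the contradiction is complete. The main obstacle is precisely this case $s = r$, where the Hodge Index bound is insufficient and one must exploit the exact form of $L \cdot C$ afforded by Lemma~\ref{list-rational-curves}.
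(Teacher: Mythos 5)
Your proposal is correct, and it rests on the same skeleton as the paper's proof: argue by contradiction via Theorem \ref{main-general}, then invoke Remark \ref{condition1}, Lemma \ref{rational-curve} and Lemma \ref{list-rational-curves}. The differences are in organization and in the hardest computation. You stratify by $s$ first, killing $2 \le s \le r-1$ uniformly by the Hodge Index Theorem (the paper performs the same check, $Q(2)\ge 0$ and $Q(r-1)\ge 0$, but inside its Case 2); you fold Case (5) into Case (3) by the ruling-swap automorphism of $\proj^1\times\proj^1$, where the paper only remarks that the argument is ``very similar''; and in the critical instance $s=r$ of Case (3) you keep $a$ arbitrary and study the quadratic $(r-1)^2(r+3)t^2 - 2(r^2+2r+3)L^2 t + (r+3)L^4$ in $t=a^2$, whereas the paper first reduces to $a=1$ (since $a\ge 2$ forces $\varepsilon \ge 1$) and then uses the identity $(L\cdot C)^2 - L^2C^2 = (n-b)^2$ together with $n-b\ge 2$. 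I verified the step you left unexecuted: the larger root is $t_+ = \frac{L^2\left[(r^2+2r+3)+2\sqrt{3r(r+2)}\right]}{(r-1)^2(r+3)}$, and with $L^2 \le r-5$ the comparison $t_+<1$ reduces, after squaring, to $4r^4+112r^3+88r^2-528r+324>0$, which holds for $r\ge 2$; so $a^2\ge 1>t_+$ does give the required non-negativity. Your coefficient and discriminant computations ($48L^4r(r+2)$) are also correct, as are the quick eliminations of Cases (4) and (6) for $s=r$. The paper's route through $a=1$ is computationally lighter; yours is more uniform in $a$ and makes the role of the hypothesis $r\ge L^2+5$ transparent as exactly the condition forcing $t_+<1$.
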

\begin{proof}

If the statement of the theorem is false, then by Theorem
\ref{main-general}, 
there is an irreducible and reduced curve 
$C$ passing through $s \le r$ very general points with multiplicity one 
each such that $\varepsilon(X,L,r) = \frac{L\cdot C}{s}$. We will show
this is impossible. 

By Remark 
\ref{condition1}, $C$ must satisfy $C^2=s-1$. By Lemma
\ref{rational-curve}, $C$ is a smooth rational curve. 
We will consider four cases below
which deal with all the six possibilities
listed in Lemma \ref{list-rational-curves}.

Let $L = aC_0+bf$ with $a > 0$ and $b > ae$. 

{\bf Case 1:} 
Let $C = C_0$. Then $C^2 = -e \le 0$. On the other hand, $C^2
=s-1\ge 0$. So $e=0$ and $s=1$. Then $\varepsilon(X,L,r) = L \cdot C_0
\ge 1$. But this is a contradiction because we have $r \ge L^2$
and $\varepsilon(X,L,r) < \sqrt{\frac{r+2}{r+3}}\sqrt{\frac{L^2}{r}} <
1$.  
The same argument holds when $C=f$.

{\bf Case 2:}
Let $C = C_0 + nf$ for $n \ge e$. 

Then $L\cdot C = an+b-ae$ and
$C^2 = 2n-e=s-1$. So $\varepsilon(X,L,r) = \frac{L \cdot C}{s} =
\frac{an+b-ae}{2n-e+1} \ge \frac{an+1}{2n-e+1}$. The last inequality 
holds because $b>ae$.  Thus if $a \ge 2$,  $\varepsilon(X,L,r) = \frac{L
  \cdot C}{s} \ge 1$, again contradicting our assumption. 

Now let $a=1$. The desired contradiction follows from 
the inequality:
\begin{eqnarray}\label{to-show}
\frac{(L\cdot C)^2}{s^2} \ge
\left(\frac{r+2}{r+3}\right)\frac{L^2}{r}. 
\end{eqnarray}

We will now establish \eqref{to-show}.
If $s=1$,  then $\varepsilon(X,L,r) \ge 1$. So there is nothing to
prove. 

Let $2 \le s < r$. 
By the Hodge Index Theorem, 
\eqref{to-show} follows if we show that $\frac{C^2}{s^2} \ge \frac{r+2}{r(r+3)}$. 
This, in turn, is equivalent to $r(r+3)(s-1) \ge (r+2)s^2$. It is not
difficult to check this inequality holds when 
$2 \le s \le  r-1$ and $r \ge 4$. Indeed, writing the difference of
the two terms as a quadratic in $s$, we have
$Q(s) = -(r+2)s^2+r(r+3)s-r(r+3)$. Its graph is a downward sloping 
parabola and it is easy to see that $Q(2) \ge 0$ and $Q(r-1) \ge 0$
when $r\ge 4$. 

Now let $s=r$.
By hypothesis, $4 \le r-1-L^2 = C^2-L^2 = 2(n-b)$. Hence 
$n-b\ge 2$. Note that $L^2 = 2b-e$, $C^2 = 2n-e=r-1$ and 
$L\cdot C  =b+n-e$. Thus  
$(L\cdot C)^2 - L^2 C^2 = (n-b)^2$. We also
have $C^2 = L^2 + 2(n-b)$ and $L^2 = C^2-2(n-b)=r-1-2(n-b)$.

\begin{eqnarray*}
\eqref{to-show} &\Leftrightarrow&\frac{(L\cdot C)^2}{r^2} \ge \left(\frac{r+2}{r+3}\right)\frac{L^2}{r}\\
&\Leftrightarrow& L^2C^2+(n-b)^2 \ge \left(\frac{r(r+2)}{r+3}\right)L^2\\
&\Leftrightarrow& (L^2)^2+2(n-b)L^2+(n-b)^2  \ge \left(\frac{r(r+2)}{r+3}\right)L^2\\
&\Leftrightarrow& L^2 + 2(n-b)+\frac{(n-b)^2}{L^2} \ge  \frac{r(r+2)}{r+3}\\
&\Leftrightarrow& r-1 + \frac{(n-b)^2}{L^2} \ge  \frac{r(r+2)}{r+3}\\
&\Leftrightarrow& \frac{(n-b)^2(r+3)}{L^2} \ge  3.
\end{eqnarray*} 

The last inequality holds because $n-b \ge 2$ and $r \ge L^2+5$. 

{\bf Case 3:}
Let $e=0, m\ge 1$ and $n=1$. 

Then $C^2 = 2m$ and $s = 2m+1$. So $\frac{L\cdot C}{s} =
\frac{a+bm}{2m+1}$. If $b \ge 2$, then 
$\varepsilon(X,L,r) = \frac{L\cdot C}{s} \ge 1$, which is a
contradiction. Hence we have $b=1$ and $L = aC_0+f$. The argument here
is very similar to the argument in {\bf Case 2} with $e = 0$. 

{\bf Case 4:} Finally, let $C = 2C_0+2f$ and $e=1$. We will show that 
$C$ can not be a Seshadri curve in this case. 

We have $C^2 = 4$, $s  = 5$.  Now $L \cdot C = 2b$ and 
$\frac{L\cdot C}{s} = \frac{2b}{5}$. If $a \ge 2$, then $b \ge 3$. So
$\varepsilon(X,L,r) = \frac{L\cdot C}{s} \ge 1$, which is a
contradiction because $r \ge L^2$. So $a=1$ and $b=2$. Thus $\frac{L\cdot C}{s} = 4/5$. On
the other hand, $\sqrt{\frac{L^2}{r}}\sqrt{\frac{r+2}{r+3}} =
\sqrt{\frac{3(r+2)}{r(r+3)}}$. It is easy to see that 
$4/5 \ge  \sqrt{\frac{3(r+2)}{r(r+3)}}$ for all $r \ge 5$. 

This completes the proof of the theorem. 
\end{proof}

\bibliographystyle{plain}

\end{document}